\newtheorem{theorem}{Theorem}
\newtheorem{lemma}[theorem]{Lemma}
\newtheorem{claim}[theorem]{Claim}
\theoremstyle{definition}
\newcommand{\refT}[1]{Theorem~\ref{#1}}
\newcommand{\refS}[1]{Section~\ref{#1}}
\newcommand{\refApp}[1]{Appendix~\ref{#1}}
\newcommand{\refCl}[1]{Claim~\ref{#1}}
\renewcommand\Pr{\operatorname{\mathbb P{}}}
\newcommand\E{\operatorname{\mathbb E{}}}
\newcommand\Bin{\operatorname{Bin}}
\newcommand\set[1]{\ensuremath{\{#1\}}}
\newcommand\xpar[1]{(#1)}
\newcommand\bigpar[1]{\bigl(#1\bigr)}
\newcommand\Bigpar[1]{\Bigl(#1\Bigr)}
\newcommand\biggpar[1]{\biggl(#1\biggr)}
\newcommand\Bigsqpar[1]{\Bigl[#1\Bigr]}
\newcommand\bigabs[1]{\bigl|#1\bigr|}
\def\rompar(#1){\textup(#1\textup)}    
\def\xexp(#1){e^{#1}}
\newcommand\ceil[1]{\lceil#1\rceil}
\newcommand\bigceil[1]{\bigl\lceil#1\bigr\rceil}
\newcommand\floor[1]{\lfloor#1\rfloor}
\newcommand{\cC}{\mathcal{C}}
\newcommand{\cE}{\mathcal{E}}
\newcommand{\cH}{\mathcal{H}}
\newcommand{\cI}{\mathcal{I}}
\newcommand{\cK}{\mathcal{K}}
\newcommand{\cP}{\mathcal{P}}
\newcommand{\cS}{\mathcal{S}}
\newcommand{\cT}{\mathcal{T}}
\newcommand{\cA}{\mathcal{A}}
\newcommand{\cB}{\mathcal{B}}
\newcommand{\invinv}{^{\scriptscriptstyle\text{-}1}}
\newcommand{\inv}{\invinv\subN}
\newcommand{\subN}{_{\scriptscriptstyle N}}
\newcommand{\indic}[1]{\mathbbm{1}_{\{{#1}\}}}
\let\OLDthebibliography\thebibliography
\renewcommand\thebibliography[1]{
	\OLDthebibliography{#1}
	\setlength{\parskip}{0pt}
	\setlength{\itemsep}{0pt plus 0.3ex}
}
\title{Bounds on Ramsey Games via Alterations}
\author{He Guo%
	\thanks{Faculty of Mathematics, Technion, Haifa~32000, Israel. E-mail: {\tt hguo@campus.technion.ac.il}.}
	\ and Lutz Warnke%
	\thanks{Department of Mathematics, University of California, San Diego, La Jolla CA~92093, USA. 
		E-mail: {\tt lwarnke@ucsd.edu}. 
		Supported by NSF~grant DMS-1703516, NSF~CAREER grant~DMS-1945481, and a Sloan Research Fellowship.}
}
\date{January 13, 2022; revised January 24, 2023}
\begin{document}
	\maketitle
	
\thispagestyle{empty}

\begin{abstract}
	We present a refinement of the classical alteration method for constructing $H$-free graphs: 
	for suitable edge-probabilities~$p$, we show that removing all edges in $H$-copies of the binomial random graph~$G_{n,p}$  
	does not significantly change the independence number. 
	This differs from earlier alteration approaches of Erd{\H{o}}s and Krivelevich, 
	who obtained similar guarantees by removing one edge from each $H$-copy (instead of all of~them). 
	We demonstrate the usefulness of our refined alternation method via two applications to online graph Ramsey games, where it enables easier analysis.   
\end{abstract}

\textbf{Keywords: }Alteration method, Ramsey theory, Online Ramsey games, Deletion method

\section{Introduction}
The probabilistic method is a widely-used tool in discrete mathematics. 
Many of its powerful approaches have been developed in the pursuit of understanding the graph Ramsey number~$R(H,k)$, 
which is defined as the the minimum number~$n$ so that any $n$-vertex graph contains either a copy of~$H$ or an independent set of size~$k$. 
For example, in~1947 Erd{\H{o}}s pioneered the \emph{random coloring approach} 
to obtain the lower bound~$R(K_k,k) = \Omega(k 2^{k/2})$ as~$k \to \infty$, where~$K_k$ denotes a~$k$-vertex clique, 
and in~1961 he developed the \emph{alteration method} in order 
to obtain~$R(K_3,k)=\Omega(k^2/(\log k)^2)$ as~$k \to \infty$, see~\cite{erdos1961graph}. 
In~1975 and~1977 Spencer~\cite{Spencer1975,Spencer1977} reproved these 
results via the \emph{Lov\'asz Local Lemma}, 
and also extended them to lower bounds on~$R(H,k)$ for~$H \in \{K_s,C_{\ell}\}$, 
where~$C_{\ell}$ denotes a cycle of length~$\ell$. 
In~1994 Krivelevich~\cite{krivelevich1995bounding} further extended this to general graphs~$H$ via a new \emph{(large-deviation based) alteration approach}, 
obtaining the lower~bound 
\begin{equation}\label{eq:RamseyHk}
	R(H,k)=\Omega\Bigpar{(k/\log k)^{m_2(H)}}
	\quad \text{ with } \quad m_2(H):=\max_{F\subseteq H}\Bigpar{\indic{v_F \ge 3}\tfrac{e_F-1}{v_F-2} + \indic{F=K_2}\tfrac{1}{2}} 
\end{equation}
as~$k \to \infty$, 
where the implicit constants may depend on~$H$ 
(writing~$v_F:=|V(F)|$ and~$e_F := |E(F)|$ for the number of vertices and edges of a graph~$F$, 
and denoting by~$\indic{\cE}$ the indicator variable for the event~$\cE$, as usual). 
By analyzing \emph{(semi-random) $H$-free processes}, 
in~1995 Kim~\cite{kim1995ramsey}  and in~2010 Bohman--Keevash~\cite{bohman2010early} have further 
improved the logarithmic factors in~\eqref{eq:RamseyHk} for some forbidden graphs~$H$ such as 
triangles~$K_3$, cliques~$K_s$, and cycles~$C_\ell$. 
In~2019 Mubayi and Verstraete~\cite{MV2019} used \emph{pseudorandom graphs} to obtain further 
logarithmic improvements of~\eqref{eq:RamseyHk} for cycles~$C_\ell$ of odd length~$\ell \ge 5$.  
However, despite considerable effort, for~$H \neq K_3$ the best known lower and upper bounds 
on the graph Ramsey number~$R(H,k)$ are still polynomial factors apart, see~\cite{bohman2010early,bohman2013dynamic,pontiveros2013triangle}. 
%
Unsurprisingly, in quest for progress research has thus stretched in several directions, 
including (i)~variants of Ramsey numbers and (ii)~further advancements of existing probabilistic proof methods.

In this paper we present a \emph{refinement of the alteration method} for constructing $H$-free graphs: 
for suitable edge-probabilities~$p=p(n)$, we show that removing all edges in \mbox{$H$-copies} of the binomial random graph~$G_{n,p}$  
does not significantly change the independence number, i.e., the size of the largest independent set (see \refS{sec:intro:tool}). 
The main innovation is that we can allow for deleting all edges in \mbox{$H$-copies}, 
and not just one edge from each \mbox{$H$-copy} as in earlier alteration approaches of Erd{\H{o}}s~\cite{erdos1961graph} and Krivelevich~\cite{krivelevich1995bounding}.
Our refinement is a natural random graph statement in its own right, 
and we demonstrate its usefulness via two applications to online graph Ramsey games, where it enables easier analysis. 
In particular, for Ramsey,~Paper,~Scissors~games and online~Ramsey~numbers this 
allows us to extend bounds of
Conlon, Fox, Grinshpun and~He~\cite{conlon2018online} and Fox, He and Wigderson~\cite{fox2019ramsey} 
to a large class of forbidden graphs~$H$ (see~\refS{sec:intro:application}).

\subsection{Refined alteration method}\label{sec:intro:tool}
To motivate our refined alteration approach, we first review the classical alteration arguments for the Ramsey bound~\eqref{eq:RamseyHk}
due to Erd{\H{o}}s~\cite{erdos1961graph} and Krivelevich~\cite{krivelevich1995bounding}. 
They both use a 
binomial random graph~$G_{n,p}$ with ${n=\Theta((k/\log k)^{m_2(H)})}$ vertices and edge-probability ${p=\Theta((\log k)/k)}$ 
to construct an $n$-vertex subgraph ${G \subseteq G_{n,p}}$ with two properties: 
(i)~$G$ is $H$-free, and 
(ii)~$G$ contains no independent set~$K$ of size~$k$; 
both properties together imply~${R(H,k)> n}$. 
Chernoff bounds suggest that the number~$X_K:=|E(G_{n,p}[K])|$ of edges of $G_{n,p}$ inside~$K$ is around~$\tbinom{k}{2}p$, 
so for property~(ii) it suffices to show that the alteration from~$G_{n,p}$ to~$G$ does not remove `too many' edges from each $k$-vertex subset~$K$. 
To illustrate that this is a non-trivial task, 
let us consider the natural upper bound~$e_H \cdot |\cH_K|$ on the number of removed edges from~$K$, 
where~$\cH_K$ denotes the collection of all $H$-copies that have at least one edge inside~$K$. 
For any~$\delta>0$ it turns out that~$\Pr(|\cH_K| \ge \delta \tbinom{k}{2}p) \ge e^{-o(k)}$ due to `infamous' upper tail~\cite{JR2002,SW2018} behavior (see~\refApp{apx:UT}), 
which 
suggests that one has to rather carefully handle edges that are contained in multiple~$H$-copies.

For triangles~$H=K_3$,  Erd{\H{o}}s~\cite{erdos1961graph} overcame these difficulties in~1961
by a clever ad-hoc greedy alteration argument, 
showing that whp\footnote{In this paper \emph{whp} (with high probability) always means with probability tending to one as $k \to \infty$.} (with high probability) the following works: 
If one sequentially traverses the edges of~$G_{n,p}$ in any order, only accepting edges that do not create a triangle together with previously accepted edges, then the resulting `accepted' subgraph~$G \subseteq G_{n,p}$ satisfies the independent set property~(ii), and trivially the \mbox{$H$-free} property~(i). 
The fact that any edge-order works was exploited by 
Conlon, Fox, Grinshpun and~He~\cite{conlon2018online} and 
Fox, He and Wigderson~\cite{fox2019ramsey} 
in their analysis of triangle-based online Ramsey~games. 

To handle general graphs~$H$, Krivelevich~\cite{krivelevich1995bounding} developed in~1994 an elegant alteration~argument, 
showing that whp the following works: 
If one constructs~${G \subseteq G_{n,p}}$ by deleting all edges that are in some maximal (under inclusion) collection~$\cC$ of edge-disjoint~$H$-copies in~$G_{n,p}$, 
then this (a)~removes less than~$X_K \approx \tbinom{k}{2}p$ edges from each $k$-vertex subset~$K$, and (b)~yields an $H$-free graph by maximality of~$\cC$, 
establishing both properties~(ii) and~(i).  
Unfortunately, this slick 
argument is hard to adapt to online Ramsey games, 
where one cannot foresee whether in future turns a given edge will be contained in an~$H$-copy or not (cf.~\refS{sec:intro:application}).

Our refined alteration approach overcomes the discussed difficulties,  
by showing that whp the desired \mbox{$H$-free} and independent set properties~(i) and~(ii) remain valid even 
if one deletes \emph{all} edges from~$G_{n,p}$ that are in some \mbox{$H$-copy}. 
\refT{thm:Hremoval} is stated for strictly \mbox{$2$-balanced} graphs~$H$, 
i.e., which satisfy ${m_2(H)>m_2(F)}$ for all subgraphs~${F\subsetneq H}$ 
(this class includes many graphs of interest,
including cliques~$K_s$, cycles~$C_\ell$, 
complete multipartite graphs~$K_{t_1, \ldots, t_r}$, and hypercubes~$Q_d$). 
For many applications such as~\eqref{eq:RamseyHk} the restriction to strictly \mbox{$2$-balanced} graphs is immaterial, 
since one can often obtain the general case by forbidding a strictly \mbox{$2$-balanced} subgraph~$H_0 \subseteq H$ with $m_2(H_0)=m_2(H)$; cf.~\refS{sec:intro:OnlineRamseyGame}.
Below~$X_K=|E(G_{n,p}[K])|$ denotes the number of edges of $G_{n,p}$ inside~$K$, 
and~$Y_K$ denotes the number of edges in~$E(G_{n,p}[K])$ that are in some~$H$-copy of~$G_{n,p}$. 
%
\begin{theorem}[Main alteration tool]\label{thm:Hremoval}%
Let~$H$ be a strictly~$2$-balanced graph. 
Then, for any fixed~${\delta>0}$, the following holds for all sufficiently large~${C \ge C_0=C_0(\delta,H)}$ and sufficiently small~${0 < c \le c_0=c_0(C,\delta,H)}$. 
The random graph~$G_{n,p}$ with ${n := \floor{c (k/\log k)^{m_2(H)}}}$ vertices and edge-probability ${p := C(\log k)/k}$ 
whp 
satisfies~${Y_K \le \delta \tbinom{k}{2}p}$ and~${X_K \ge (1-\delta) \tbinom{k}{2}p}$ for all~$k$-vertex subsets~$K$ of~$G_{n,p}$. 
\end{theorem}
\noindent
As discussed, our refined alteration method constructs ${G \subseteq G_{n,p}}$ 
by deleting all edges that are in some \mbox{$H$-copy} of~$G_{n,p}$, 
so that~$G$ trivially satisfies the \mbox{$H$-free} property~(i). 
For fixed~$ \delta \in (0,1/2)$ and suitable~$n,p$, our main 
tool \refT{thm:Hremoval} then ensures 
that the following holds whp: for any $k$-vertex subset~$K$ of~$G$ we~have 
\[
|E\bigpar{G[K]}| = {X_K-Y_K} \ge {(1-2\delta) \tbinom{k}{2}p} > 0 ,
\]
which implies that~$G$ also satisfies the independent set property~(ii).  
To put this into context, we remark that the size of the largest independent set of~$G_{n,p}$ whp satisfies $\alpha(G_{n,p}) = \Theta(\log(np)/p)= \Theta(k)$ when~$m_2(H)>1$, see~\cite[Section~7.1]{JLR}, 
which implies that whp~$\Theta(k) = \alpha(G_{n,p}) \le \alpha(G) < k$. 
The conceptual crux is thus that deleting all edges in~$H$-copies does not significantly change the independence number of~$G_{n,p}$ 
(earlier alteration approaches~\cite{erdos1961graph,krivelevich1995bounding} only gave such guarantees 
after deleting a carefully chosen subset of these edges). 

As we shall see in~\refS{sec:intro:application}, variants of the above-discussed alteration argument carry over to certain online Ramsey games 
(where arbitrary deletion of edges in \mbox{$H$-copies} will be key). 
We believe that the upper bound on~$Y_K$ from \refT{thm:Hremoval} will also be useful in other contexts, 
since the the classical alteration method has become a standard tool in many applications, 
including coloring problems for graphs with forbidden substructures~\cite{krivelevich1997minimal,AF2008,bohman2010coloring},
Ramsey number variants~\cite{krivelevich1995bounding,Sudakov2007, conlon2018online,fox2019ramsey}, 
induced bipartite triangle-free graphs~\cite{erdos1988,KLST2018}, 
algorithmic approximation ratios~\cite{krivelevich1997approximate}, 
and semi-random constructions~\cite{kim1995ramsey,GW2017,GW2021}.
Further extensions of our refined alteration method are discussed in \refS{sec:extensions}.

\section{Applications to online Ramsey games}\label{sec:intro:application}
In this section we demonstrate the usefulness of our refined alteration method via two applications to online graph Ramsey games. 
Here it will be crucial that we can allow for arbitrary deletion of edges in $H$-copies, 
which enables easier analysis in the online setting (by similar reasoning as in the classical `offline'~setting). 

\subsection{Ramsey, Paper, Scissors Game}\label{sec:intro:RamseyPaperScissors}
Our first application concerns the \emph{Ramsey, Paper, Scissors game} 
that was introduced by Fox, He and~Wigderson~\cite{fox2019ramsey}. 
For a graph~$H$, this is a game between two players, Proposer and Decider, 
that starts with a finite set~$V={\{1,2, \ldots, n\}}$ of~$n$ isolated vertices. 
In each turn, Proposer proposes a pair of non-adjacent vertices from~$V$, 
and Decider simultaneously decides whether or not to add it as an edge to the current graph (without knowing which pair is proposed). 
Proposer cannot propose vertex-pairs that would form a copy of~$H$ together the current graph, nor vertex-pairs that have been proposed before. 
The {\emph{RPS~number}~$\mathrm{RPS}(H,n)$} is defined\footnote{For imperfect-information games such as Ramsey, Paper, Scissors (both players make simultaneous moves) one usually considers randomized strategies, see~{\cite[pp.~14,~169]{GameTheory}}, 
	motivating why the definition of~$\mathrm{RPS}(H,n)$ includes probability~of~winning.}  
as the largest number~$k$ for which Proposer can guarantee that, 
with probability at least~$1/2$ (regardless of Decider's strategy), 
the final graph has an independent set of size~$k$. 
Our refined alteration method enables us to prove the following upper bound on~$\mathrm{RPS}(H,n)$ 
for all strictly $2$-balanced graphs~$H$.
%
\begin{theorem}[Ramsey, Paper, Scissors Game]\label{thm:main2}
If~$H$ is a strictly $2$-balanced graph, 
then the RPS~number satisfies ${\mathrm{RPS}(H,n) = O(n^{1/m_2(H)}\log n)}$ 
as~$n \to \infty$, where the implicit constant may depend on~$H$. 
\end{theorem}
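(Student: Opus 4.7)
The plan is to specify a randomized Decider strategy and couple it with a binomial random graph, so that an $H$-free ``template'' is forced to sit inside the final accepted graph. Fix $p := c\, n^{-1/m_2(H)}$ with $c = c(H) > 0$ a sufficiently small constant. Decider's strategy will be the simplest possible: on each turn, independently accept the proposed edge with probability $p$. For the analysis I will couple this with a random graph $\tilde{G} \sim G(n,p)$ sampled in advance on the vertex set $[n]$, designed so that a proposed edge $e$ is accepted iff $e \in \tilde{G}$; since proposals are distinct and each Bernoulli$(p)$ coin is independent of the game history, this coupling preserves the joint distribution of the game outcome. Let $\tilde{G}'$ denote the $H$-free subgraph of $\tilde{G}$ obtained by deleting every edge that lies in some copy of $H$ in $\tilde{G}$, and write $G^*$ for the graph of all proposed pairs, so that the final accepted graph is $G = G^* \cap \tilde{G}$.

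The technical heart of the proof will be a refined alteration lemma: for $c$ small enough, $\alpha(\tilde{G}') \le C n^{1/m_2(H)} \log n$ with probability at least, say, $3/4$. Since $\tilde{G}' \subseteq \tilde{G}$, the standard independence-number bound for $G(n,p)$ is useless here: one must show that deleting \emph{all} edges lying in $H$-copies (rather than only a subset, as in the classical Erd{\H o}s--Krivelevich alteration) does not inflate the independence number. The natural route will be a union bound over $k$-subsets $I$ with $k := C n^{1/m_2(H)} \log n$: the event ``$I$ is independent in $\tilde{G}'$'' forces every edge of the binomial subgraph $\tilde{G}[I]$ to extend to a full copy of $H$ somewhere in $\tilde{G}$. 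Strict $2$-balancedness of $H$ yields $n^{v_H-2} p^{e_H-1} = O(1)$, so the expected number of $H$-extensions from any given edge is bounded; converting this into a tail estimate strong enough to beat the $\binom{n}{k}$ union bound will require a tailored large-deviation inequality for the simultaneous-coverage event over all $\binom{k}{2}$ edges of $\tilde{G}[I]$, and this is the main obstacle of the proof.

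Once the alteration lemma is established, the game-end structure will close the argument. When the game terminates Proposer has no legal move, so every pair $e \notin G^*$ satisfies that $G \cup \{e\}$ contains a copy of $H$. If some $e \in \tilde{G}' \setminus G^*$ existed, then --- because $G \subseteq \tilde{G}$ and $e \in \tilde{G}' \subseteq \tilde{G}$ --- such an $H$-copy would lie entirely inside $\tilde{G}$ and use the edge $e$, contradicting the defining property of $\tilde{G}'$. Hence $\tilde{G}' \subseteq G^* \cap \tilde{G} = G$, giving $\alpha(G) \le \alpha(\tilde{G}')$; combining with the alteration lemma yields $\alpha(G) \le C n^{1/m_2(H)} \log n$ with probability at least $3/4 > 1/2$, and hence $\mathrm{RPS}(H,n) = O(n^{1/m_2(H)} \log n)$ as required.
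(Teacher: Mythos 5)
Your game-theoretic reduction is exactly the one in the paper: an oblivious Decider who accepts each proposal independently with probability $p=\Theta(n^{-1/m_2(H)})$, a coupling with $G_{n,p}$, and the observation that the final accepted graph $G$ contains every edge of $G_{n,p}$ that lies in no $H$-copy of $G_{n,p}$ (your containment $\tilde G'\subseteq G$ is correct, and is the paper's coupling property that every edge of $E(G_{n,p})\setminus E(G)$ lies in an $H$-copy of $G_{n,p}$). Up to that point the argument is sound, and the parameter choices are consistent with the paper's.

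The genuine gap is the ``refined alteration lemma'' that you explicitly defer: the statement that, whp, deleting \emph{all} edges of $G_{n,p}$ lying in $H$-copies still leaves an edge inside every $k$-subset $K$ with $k=\Theta(n^{1/m_2(H)}\log n)$. This is not a routine computation; it is the entire technical content of the theorem (it is Theorem~\ref{thm:Hremoval} of the paper, whose proof occupies all of Section~\ref{sec:Alteration}). The obstacle is the one the paper isolates in its Appendix: the natural bound on the number of deleted edges inside $K$ via $|\cH_K|$, the number of $H$-copies meeting $K$ in an edge, cannot work, because $\Pr\bigl(|\cH_K|\ge\delta\tbinom{k}{2}p\bigr)\ge e^{-o(k)}$ by the infamous-upper-tail phenomenon, which is far too weak to survive the union bound over $\tbinom{n}{k}$ sets (one needs failure probability $\ll n^{-k}$). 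So the ``tailored large-deviation inequality for the simultaneous-coverage event'' you invoke does not exist in the naive form; one must count deleted \emph{edges} rather than $H$-copies and exploit edge-disjointness. The paper does this by proving $Y_K\le|\cI_K|+2e_H^2(|\cT_K|+|\cP_K|)\Delta_H$, where $\cI_K,\cT_K,\cP_K$ are size-maximal edge-disjoint families of (pairs of) $H$-copies of the various intersection types and $\Delta_H$ is the maximum number of $H$-copies on a single edge; each family is then controlled by the Erd\H{o}s--Tetali disjoint-occurrence bound $\Pr(Z\ge x)\le(e\mu/x)^x$, together with a separate argument that $\Delta_H=O(\log k)$ whp. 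Without this (or an equivalent) mechanism, your proposal is a correct reduction of the theorem to an unproved lemma, and that lemma is where all the difficulty lives.
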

\noindent
For all strictly $2$-balanced graphs~$H$, \refT{thm:main2} gives the best known upper bounds for RPS numbers.
For $s$-vertex cliques we obtain~$\mathrm{RPS}(K_s,n)=O\bigpar{n^{2/(s+1)} \log n}$, 
which generalizes the upper bound part of the~$\mathrm{RPS}(K_3,n)=\Theta(\sqrt{n} \log n)$ result of Fox, He and~Wigderson~\cite{fox2019ramsey} for triangles. 
%

The following proof of \refT{thm:main2} demonstrates that for Ramsey, Paper, Scissors games, 
our refined alteration approach leads to conceptually simple upper bound proofs.  
%
\begin{proof}[Proof of \refT{thm:main2}]
For~$\delta := 1/4$ we choose~$C>0$ large enough and then~$c>0$ small enough so that~\refT{thm:Hremoval} 
applies to~$G_{n,p}$ with ${n := \floor{c (k/\log k)^{m_2(H)}}}$ and ${p := C (\log k)/k}$. 
We shall analyze the following Decider strategy: 
in each turn Decider accepts the (unknown) proposed vertex-pair as an edge independently with probability~$p$. 
To prove~$\mathrm{RPS}(H,n) < k = O(n^{1/m_2(H)}\log n)$ as~$n \to \infty$ (where the implicit constant depends on~$H$), 
it suffices to show that the resulting final graph has whp no independent set of size~$k$. 

Turning to details, let~$G$ denote the resulting final graph at the end of the game, i.e., which contains all accepted edges.  
Since all edges that do not create~$H$-copies are eventually proposed, 
there is a natural coupling between~$G_{n,p}$ and~$G$ which satisfies the following two properties: 
(a)~that~${E(G) \subseteq E(G_{n,p})}$, and
(b)~that every edge in~${E(G_{n,p}) \setminus E(G)}$ is contained in an~$H$-copy of~$G_{n,p}$. 
Invoking \refT{thm:Hremoval}, it follows 
that this coupling satisfies the following whp: 
for any $k$-vertex subset~$K$ of~$G$ we~have 
\[
\bigabs{E\bigpar{G[K]}} \: \ge \: X_K-Y_K \: \ge \: (1-2\delta) \tbinom{k}{2}p = \tfrac{1}{2}\tbinom{k}{2}p > 0 , 
\]
which implies that the final graph~$G$ has whp no independent set of size~$k$, 
so that the desired upper bound~$\mathrm{RPS}(H,n) < k= O(n^{1/m_2(H)}\log n)$ follows (as discussed~above).  
\end{proof}
%

\subsection{Online Ramsey Game}\label{sec:intro:OnlineRamseyGame}
Our second application concerns the widely-studied \emph{online Ramsey game} 
(see, e.g.,~\cite{Beck,KR2005,KK2009,Conlon2009,conlon2018online}) that was 
introduced independently by Beck~\cite{Beck} and Kurek--Ruci{\'n}ski~\cite{KR2005}. 
This is a game between two players, Builder and Painter, 
that starts with an infinite set~${V=\{1,2, \ldots\}}$ of isolated vertices. 
In each turn, Builder places an edge between two non-adjacent vertices from~$V$, 
and Painter immediately colors it either red or blue. 
The {\emph{online Ramsey number}~$\tilde{r}(H,k)$} is defined 
as the smallest number of turns~$N$ that Builder needs to guarantee the existence of 
either a red copy of~$H$ or a blue copy of~$K_k$ (regardless of Painter's strategy). 
Our refined alteration method enables us to prove the following lower bound on~$\tilde{r}(H,k)$, 
which, up to logarithmic factors, is about~$k$ times the best-known general 
lower bound for the usual Ramsey number~$R(H,k)$; cf.~\eqref{eq:RamseyHk}.
\begin{theorem}[Online Ramsey Game]\label{thm:maincor}
If~$H$ is a graph with~$e_H \ge 1$ edges, then the online Ramsey number satisfies  
${\tilde{r}(H,k)=\Omega\bigpar{k\cdot (k/\log k)^{m_2(H)}}}$ as~$k \to \infty$, 
where the implicit constant may depend on~$H$. 
\end{theorem}
\noindent
For general graphs~$H$, \refT{thm:maincor} gives the best known lower bounds for online Ramsey numbers. 
For \mbox{$s$-vertex} cliques we obtain~$\tilde{r}(K_s,k)=\Omega\bigpar{k^{(s+3)/2}/(\log k)^{(s+1)/2}}$, 
which generalizes the lower bound of Conlon, Fox, Grinshpun and He~{\cite[Theorem~4]{conlon2018online}} for triangles, 
and improves~{\cite[Corollary~3]{conlon2018online}} for small clique sizes~$s$. 
%

The following proof of \refT{thm:maincor} demonstrates that for Online Ramsey games, 
our refined alteration approach again leads to proofs that conceptually mimic the usual reasoning from the classical offline setting. 
To this end we shall analyze a semi-random Painter strategy, whose main ideas we now outline. 
The default color of an edge is blue. 
But if an edge is placed between vertices of sufficiently `high' degree, it does the following independently with probability~$p$: 
it colors the edge red, unless this would create a red $H$-copy, in which case the edge is still colored blue (i.e., the red-coloring attempt is rejected). 
By construction there are no red $H$-copies, and 
it turns out that blue cliques~$K_k$ can only appear inside the growing set~$U$ of high-degree vertices 
(since vertices in~$K_k$ must have `high'~degree). 
Ignoring a number of technicalities, 
after the first~$N=\Theta\bigpar{k\cdot (k/\log k)^{m_2(H)}}$ turns, 
we are able to establish that, inside each~$k$-vertex subset~$K \subseteq U$, 
the following holds: 
(i)~the number of `rejected' red-coloring attempts is at most~$Y_K$, 
and (ii)~the total number of red-coloring attempts is at least~$X^{\star}_{K} \sim {\Bin\bigpar{\tfrac{1}{2}\tbinom{k}{2},p}}$. 
Using our refined alteration approach and Chernoff Bounds we can then  show that, whp, 
every~$k$-vertex subset~$K \subseteq U$ contains at least~$X^{\star}_{K}-Y_K > 0$ red~edges, 
which prevents blue cliques~$K_k$ and thus gives~$\tilde{r}(H,k) > N=\Theta\bigpar{k\cdot (k/\log k)^{m_2(H)}}$. 
The technical details of the following proof are complicated by the fact that the online Ramsey game is played on an infinite set ${V=\{1,2, \ldots\}}$ of~vertices, 
which requires special care in the coupling and union bound~arguments. 
\begin{proof}[Proof of \refT{thm:maincor}]
For convenience we first suppose that~$H$ is strictly~$2$-balanced. 
For~$\delta := 1/8$ we choose~${C \ge 64 e_H}$ large enough and then~$c>0$ small enough 
so that~\refT{thm:Hremoval} applies to~$G_{n,p}$ with ${n := \floor{c (k/\log k)^{m_2(H)}}}$ and ${p := C (\log k)/k}$.  
Set~${L :=\floor{(k-1)/4}}$. 
At any moment of the game, we define~$U \subseteq V$ as the set of all vertices that, 
in the current graph, are adjacent to at least~$L$ edges placed by builder 
(to clarify: the growing vertex set~$U$ is updated at the end of each~turn).

We shall analyze the following Painter strategy: Painter's default color is blue, 
but if an edge~$e=\{x,y\}$ is placed inside~$U$, then Painter does the following independently with probability~$p$~($\star$): 
it colors the edge~$e$ red, unless this would create a red $H$-copy~($\dagger$), 
in which case the edge~$e$ is still colored blue. 
By construction there are no red $H$-copies, and blue cliques~$K_k$ can only appear inside~$U$ 
(since all vertices in copy of~$K_k$ must be adjacent to at least~$k-1 > L$~vertices).
To prove~$\tilde{r}(H,k) > N:= \floor{L \cdot n /2} = \Omega(k \cdot (k/\log k)^{m_2(H)})$ as~$k \to \infty$ (with implicit constants depending on~$H$), 
by the usual reasoning it remains to show that after~$N$~turns there are whp no blue cliques~$K_k$ inside~$U$. 
Let~$\cK$ denote the collection of all $k$-vertex subsets~$K \subseteq U$ after~$N$~turns. 
The plan is to show that, inside each vertex set~$K \in \cK$ that can become a blue clique~$K_k$, 
there are more red-coloring attempts~($\star$) than `rejected' red-coloring attempts~($\dagger$), 
which enforces a red edge~inside~$K$.

Turning to details, note that~$|U| \le 2N/L \le n$ during the first~$N$ turns. 
Using the order in which vertices enter~$U$ (breaking ties using lexicographic order), 
at any moment during the first~$N$ turns we thus obtain an injection~$\Phi:U \mapsto \{1, \ldots, n\} = V(G_{n,p})$. 
After~$N$ turns, we abbreviate this injection by~$\Phi\subN$, and write~$\Phi\subN(K) := \{ \Phi\subN(v): v \in K\}$. 
Define~$\cB_K$ as the event that, during the first~$N$ turns, the number of `rejected' red-coloring attempts~($\dagger$) inside~$K$ is at most~$\tfrac{1}{8} \tbinom{k}{2}p$. 
There is a natural turn-by-turn inductive coupling between~$G_{n,p}$ and Painter's strategy, where the red-coloring attempt~($\star$) occurs if~$\Phi(e) :=\{\Phi(x),\Phi(y)\}$ is an edge of~$G_{n,p}$. 
A moments thought reveals that, during the first~$N$ turns, under this coupling the total number of `rejected' red-colorings~($\dagger$) inside~$K \in \cK$ is at most~$Y_{\Phi\subN(K)}$ defined with respect to~$G_{n,p}$ 
(since~($\dagger$) can only happen when a red-coloring of~$e \subseteq K$ creates a red~$H$-copy, 
which under the coupling implies that~$\Phi(e) \subseteq \Phi(K)$ is contained in an~$H$-copy of~$G_{n,p}$). 
Applying \refT{thm:Hremoval} with~$\delta =1/8$ to~$G_{n,p}$,
using the described coupling and~$|\Phi\subN(K)|=|K|=k$ it then follows that, whp, the event~$\cB_K$ occurs for all~$K \in \cK$.

Intuitively, we shall next show that, for all $k$-vertex sets~$K \in \cK$ that contain~$\binom{k}{2}$ edges (a prerequisite for having a blue clique~$K_k$ inside~$K$), 
the total number of red-coloring attempts~($\star$) inside~$K$ is at least~$\tfrac{1}{4} \tbinom{k}{2}p$. 
To make this precise, define~$\cT_K$ as the event that builder places less than~$\tbinom{k}{2}$ edges inside~$K$ during the first~$N$ turns.
Let~$X^{\star}_{K}$ denote the number of red-coloring attempts~($\star$) inside~$K$ during the first~$N$ turns, 
and define~$\cA_K$~as the event that~$X^{\star}_{K} \ge \tfrac{1}{4} \tbinom{k}{2}p$. 
Let~$\cK'$~denote the collection of all $k$-vertex subsets~$K' \subseteq V(G_{n,p})$. 
Since~$\Phi\subN$ defines an injection from~$\cK$~to~$\cK'$, writing~$\Phi\inv(K') := \{v \in V: \Phi\subN(v) \in K'\}$ 
it follows that 
\begin{equation}\label{eq:TkAK}
	\Pr(\text{$\neg\cA_K \cap \neg\cT_K$ for some $K \in \cK$}) 
	\;\le\; \sum\nolimits_{K' \in \cK'} \Pr\Bigpar{\text{$X^{\star}_{\Phi\inv(K')} \le \tfrac{1}{4}\tbinom{k}{2}p\:$~and~$\;\neg\cT_{\Phi\inv(K')}$}}. 
\end{equation}
Fix~$K' \in \cK'$, and set~$K := \Phi\inv(K')$. 
Note that, by checking in each turn for red-coloring attempts~($\star$) inside $\Phi\invinv(K') := \{v \in V: \Phi(v) \in K'\}$, 
we can determine~$X^{\star}_{K}$ without knowing~$\Phi\inv$ in advance. 
Furthermore, since every vertex is adjacent to at most~$L$ vertices before entering~$U$, 
the event~$\neg\cT_{K}$ implies that during the first~$N$ turns at 
least~${\tbinom{k}{2}-|K| \cdot L \ge \tfrac{1}{2}\tbinom{k}{2}}$ red-coloring attempts~($\star$)
happen inside~$K$, each of which is (conditional on the history) successful with probability~$p$. 
It follows that~$X^{\star}_{K}$ stochastically dominates 
a binomial random variable~$Z \sim\mathrm{Bin}\bigpar{\bigceil{\tfrac{1}{2}\binom{k}{2}},p}$, 
unless the event~$\cT_{K}$ occurs. 
Noting~$kp = C\log k \ge 64 e_H \log k$ and~$n \ll k^{e_H}$, 
by invoking standard Chernoff bounds (such as~\cite[Theorem~2.1]{JLR}) 
it then follows that 
\begin{equation}\label{eq:XK:Chernoff:dom}
	\Pr\Bigpar{\text{$X^{\star}_{\Phi\inv(K')} \le \tfrac{1}{4}\tbinom{k}{2}p\:$~and~$\;\neg\cT_{\Phi\inv(K')}$}}
	\;\le\;
	\Pr\Bigpar{Z \le \tfrac{1}{4}\tbinom{k}{2}p} \;\le\;
	\exp\Bigpar{-\tbinom{k}{2}p/16} \ll k^{-e_H k} \ll n^{-k}. 
\end{equation}
Combining~\eqref{eq:TkAK}--\eqref{eq:XK:Chernoff:dom} with~$|\cK'| \le n^k$, we readily infer that, whp, the event~$\cA_K \cup \cT_K$ occurs for all~$K \in \cK$.

To sum up, the following holds whp after~$N$ turns: every~$k$-vertex subset~$K \subseteq U$ contains 
either (a)~at least~${\tfrac{1}{4}\tbinom{k}{2}p-\tfrac{1}{8}\tbinom{k}{2}p=\tfrac{1}{4}\tbinom{k}{2}p > 0}$ red edges, 
or (b)~less than~$\tbinom{k}{2}$ edges in total. 
Both possibilities prevent a blue clique~$K_k$ inside~$K$, 
and so the desired lower bound~$\tilde{r}(H,k)>N$ follows 
(as discussed~above).

Finally, in the remaining case where~$H$ is not strictly~$2$-balanced, 
we pick a minimal subgraph~${H_0 \subsetneq H}$ with~${m_2(H_0)=m_2(H)}$. 
It is straightforward to check that, by construction, $H_0$~is strictly $2$-balanced. 
Furthermore, since any $H_0$-free graph is also~$H$-free, we also have~${\tilde{r}(H,k) \ge \tilde{r}(H_0,k)}$. 
Repeating the above proof with~$H$ replaced by~$H_0$ then gives the claimed lower bound on~$\tilde{r}(H,k)$. 
\end{proof}

It would be interesting to investigate whether \refT{thm:maincor} can be improved if one replaces our $G_{n,p}$ based  
alteration~approach by an {$H$-free process~\cite{bohman2010early}} based approach or semi-random~variants thereof~\cite{kim1995ramsey,GW2017}.


%
\section{Proof of main alteration tool \refT{thm:Hremoval}}\label{sec:Alteration}
In this section we prove the main tool \refT{thm:Hremoval} of our refined alteration method. 
The main difficulty is the desired upper bound on~$Y_K$, which denotes the number of edges in~$E(G_{n,p}[K])$ that are in some~$H$-copy of~$G_{n,p}$.  
Here our core proof strategy is to approximate~$Y_K$ by more tractable auxiliary random variables,  
inspired by ideas from~\mbox{\cite{JR2002,Warnke2017, warnke2016missing, vsileikis2019upper}}. 
In~particular, we expect that the main contribution to~$Y_K$ should come from $H$-copies that share exactly two vertices and one edge with~$K$; 
in the below proof we will denote the collection of such `good' $H$-copies by~$\cH_K^*$. 
Note that when multiple good~$H$-copies from~$\cH_K^*$ contain some common edge~$f$ inside~$K$, 
they together only contribute one edge to~$Y_K$. 
It follows that, by arbitrarily selecting one `representative' copy~$H_f \in \cH_K^*$ for each relevant edge~$f$, 
we should obtain a sub-collection~$\cH \subseteq \cH_K^*$ of good $H$-copies with~$|\cH| \approx Y_K$. 
The~$H$-copies in~$\cH$ share no edges inside~$K$ by construction, 
and it turns out that all other types of edge-overlaps are `rare', 
i.e., make a negligible contribution to~$Y_K$.  
We thus expect that there is an edge-disjoint sub-collection~$\cH' \subseteq \cH \subseteq \cH_K^*$ of good $H$-copies with~$|\cH'| \approx |\cH| \approx Y_K$, 
and here the crux is that the upper tail of~$|\cH'|$ is much easier to estimate than the upper tail of~$Y_K$ (see~\refCl{cl:UT}~below).
The following proof implements a rigorous variant of the above-discussed heuristic~ideas for bounding~$Y_K$. 
\begin{proof}[Proof of the~\mbox{$Y_K$ bound} of \refT{thm:Hremoval}]
Noting that the claimed bounds are trivial when~$m_2(H) \le 1$
(since then there are~no $k$-vertex subsets~$K$ in~$G_{n,p}$ due to~$n \ll k$), 
we may henceforth assume~$m_2(H)>1$. 

Fix a $k$-vertex set~$K$.
Let~$\cH_K$ denote the collection of all $H$-copies in~$G_{n,p}$ that have at least one edge inside~$K$, 
and let~$\cH^{*}_K \subseteq \cH_K$ denote the sub-collection of $H$-copies that moreover share exactly two vertices with~$K$.   
Let~$\cI_K$ denote a size-maximal collection of edge-disjoint~${H \in \cH^{*}_K}$.
Clearly~${|\cI_K| \le Y_K}$, and \refCl{claim:2} below establishes a related upper bound. 
Let~$\cT_K$ denote a size-maximal collection of edge-disjoint~${H \in \cH_K \setminus \cH^{*}_K}$. 
Let~$\cP_K$ denote a size-maximal collection of edge-disjoint~$H_1 \cup H_2$ with distinct ${H_1,H_2 \in \cH^{*}_K}$ that satisfy~$|E(H_1) \cap E(H_2)| \ge 1$ and~$V(H_1) \cap K \neq V(H_2) \cap K$. 
Let~$\Delta_{H,f}$~denote the number of $H$-copies in~$G_{n,p}$ that contain the edge~$f$, and 
define~$\Delta_H$~as the maximum of~$\Delta_{H,f}$ over all~$f \in E(K_n)$. 
\begin{claim}\label{claim:2}
We have~$Y_K \le |\cI_K| + 2 e_H^2 (|\cT_K|+|\cP_K|) \Delta_H$. 
\end{claim}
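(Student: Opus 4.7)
The plan is to bound $Y_K$ by classifying each of its contributing edges and charging it to the maximality of one of $\cI_K$, $\cT_K$, or $\cP_K$. Let $S$ denote the set of edges $f$ inside $K$ that lie in at least one $H$-copy of $G_{n,p}$, so $Y_K = |S|$. I would partition $S = S_1 \cup S_2$ according to whether $f$ is the unique $K$-edge of some $H$-copy in~$\cH^{*}_K$ (put $f\in S_1$) or every $H$-copy through $f$ lies in~$\cH_K\setminus\cH^{*}_K$ (put $f\in S_2$).

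For $f \in S_2$, I would pick any witness $T_f \in \cH_K \setminus \cH^{*}_K$ containing $f$; maximality of $\cT_K$ implies that $T_f$ shares an edge with some (possibly equal) element of $\cT_K$, so $T_f$ is one of at most $|\cT_K| \cdot e_H \cdot \Delta_H$ copies passing through the edges of $\cT_K$. Since each such $T_f$ accounts for at most $e_H$ edges, this gives $|S_2| \le e_H^2 |\cT_K| \Delta_H$. For $S_1$, write $E_I$ for the $K$-edges of the copies in $\cI_K$; since distinct copies in $\cI_K$ have distinct $K$-edges, $|E_I|=|\cI_K|$, contributing $|S_1 \cap E_I| \le |\cI_K|$. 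For each $f \in S_1 \setminus E_I$, fix a witness $H_f \in \cH^{*}_K$ with $K$-edge $f$. Since $f\notin E_I$ forces $H_f \notin \cI_K$, maximality of $\cI_K$ produces an edge-sharing partner $H'_f \in \cI_K$; as their $K$-edges differ, $V(H_f)\cap K \ne V(H'_f)\cap K$, so $(H_f,H'_f)$ is admissible for $\cP_K$. Maximality of $\cP_K$ then yields an element $P \in \cP_K$ sharing an edge with $H_f \cup H'_f$ (or equal to it). An edge-by-edge count — using that $P$ has at most $2 e_H$ edges, through each such edge there are at most $\Delta_H$ copies of $H$, and one must account for whether the shared edge lies on the $H_f$- or the $H'_f$-side — yields $|S_1 \setminus E_I| \le 2 e_H^2 |\cP_K| \Delta_H$. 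Summing the three bounds establishes the claim.

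The main obstacle is the accounting for $S_1 \setminus E_I$: one must chain the maximalities of $\cI_K$ and $\cP_K$ together, verifying en route that the pair $(H_f, H'_f)$ satisfies all three defining conditions of $\cP_K$ (in particular that the edge shared by $H_f$ and $H'_f$ is not itself a $K$-edge, which follows since their distinct $K$-edges are the unique edges of each copy inside $K$). The $\cT_K$ estimate for $S_2$ is comparatively routine, but relies on the same idea of absorbing an arbitrary witness copy by an edge-sharing element of the maximal collection; the constant $2 e_H^2$ in the bound is where the two symmetric sub-cases of where the shared edge sits get absorbed.
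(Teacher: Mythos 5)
Your treatment of $S_2$ and of $S_1 \cap E_I$ is correct, but the charging argument for $S_1 \setminus E_I$ has a genuine gap, and it is exactly in the case you wave at with ``one must account for whether the shared edge lies on the $H_f$- or the $H'_f$-side.'' When the edge that $H_f \cup H'_f$ shares with some $P \in \cP_K$ lies in $E(H_f)$, the chain $P \to$ (edge of $P$) $\to H_f \to f$ pins down $f$ and the count $2e_H |\cP_K| \Delta_H$ works. But when the shared edge lies only in $E(H'_f) \setminus E(H_f)$, the chain $P \to$ (edge of $P$) $\to H'_f$ does \emph{not} determine $f$: many different $f$'s can select the same $H'_f \in \cI_K$ as their edge-sharing partner. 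To recover $f$ from $H'_f$ you must then enumerate all $H$-copies sharing an edge with $H'_f$, which costs another factor of $e_H\Delta_H$, yielding $O(e_H^2 |\cP_K| \Delta_H^2)$ — one $\Delta_H$ too many. No amount of bookkeeping in the $2e_H$-or-$e_H^2$ constants fixes this, because the multiplicity of the map $f \mapsto H'_f$ is genuinely of order $\Delta_H$, not $O(1)$. A second, smaller structural issue with the split by $E_I$: if you instead try to salvage the case ``$H_f$ shares no edge with any $P\in\cP_K$'' by arguing (via maximality of $\cP_K$) that those $H_f$'s are pairwise edge-disjoint and hence at most $|\cI_K|$ many, you pay for $|\cI_K|$ a second time on top of $|S_1\cap E_I|\le|\cI_K|$, which overshoots the target.

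The paper sidesteps both problems by classifying $H$-\emph{copies} rather than $K$-\emph{edges}: let $\cH_1$ be the copies in $\cH_K$ that share an edge with some element of $\cT_K$ or of $\cP_K$, and $\cH_2 = \cH_K \setminus \cH_1$. Then $|\cH_1| \le (e_H|\cT_K| + 2e_H|\cP_K|)\Delta_H$ directly, and each copy covers at most $e_H$ edges of $K$, giving the $2e_H^2(|\cT_K|+|\cP_K|)\Delta_H$ term. For $\cH_2$, maximality of $\cT_K$ forces $\cH_2 \subseteq \cH^*_K$, and maximality of $\cP_K$ forces any two copies in $\cH_2$ with distinct $K$-vertex-pairs to be edge-disjoint; selecting one representative per covered $K$-edge therefore produces an edge-disjoint subfamily of $\cH^*_K$, bounded by $|\cI_K|$ in one stroke. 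This avoids ever routing through a chosen partner $H'_f \in \cI_K$, so there is no many-to-one map to control. If you want to preserve the flavor of your decomposition, the fix is to drop the split by $E_I$ and instead split $S_1$ by whether $f$ admits a witness $H_f \in \cH^*_K$ that shares \emph{no} edge with $\cT_K \cup \cP_K$: those witnesses are edge-disjoint by maximality of $\cP_K$ (so at most $|\cI_K|$ of them), while the remaining $f$'s have all their $\cH^*_K$-witnesses sharing an edge with $\cT_K$ or $\cP_K$ and can be charged directly, exactly as in the paper.
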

\begin{proof}[Proof of \refCl{claim:2}]
We divide the $H$-copies in~$\cH_K$ into two disjoint groups: those which share at least one edge with some~$H \in \cT_K$ or~$H_1 \cup H_2 \in \cP_K$, and those which do not; we denote these two groups by~$\cH_1$ and~$\cH_2$, respectively. 
For~$j \in \{1,2\}$, let~$\cE_j$ denote the collection of edges from~$K$ that are contained in at least one~$H$-copy from~$\cH_j$. 
Note that~$Y_K \le |\cE_1|+|\cE_2|$ and~$|\cE_1| \le e_H|\cH_1| \le e_H \cdot (e_H|\cT_K|+2e_H|\cP_K|) \Delta_H$. 

Gearing up towards bounding~$|\cE_2|$, recall that~$\cH_2$ contains all $H$-copies in~$\cH_K$ that are edge-disjoint from all~$H \in \cT_K$ and~$H_1 \cup H_2 \in \cP_K$. 
By size-maximality of~$\cT_K$, it follows that~$\cH_2 \subseteq \cH^{*}_K$ (since any $H$-copy in~$\cH_2$ that shares more than two vertices with~$K$ could be added to~$\cT_K$, contradicting maximality). 
By size-maximality of~$\cP_K$, we then infer the following property~($\bullet$) of~$\cH_2$: any two distinct $H$-copies in~$\cH_2$ are edge-disjoint, unless they both intersect~$K$ in the same two vertices (since otherwise we could add them to~$\cP_K$, contradicting maximality). 
For each $f \in \cE_2 \subseteq \tbinom{K}{2}$ we now arbitrarily select one~$H$-copy from~$\cH_2$ that contains~$f$. 
By property~($\bullet$) and the size-maximality of~$\cI_K$, 
this yields a sub-collection~$\cH'_2 \subseteq \cH_2  \subseteq \cH^{*}_K$ of edge-disjoint $H$-copies satisfying~$|\cE_2|=|\cH'_2| \le |\cI_K|$, and the claim~follows. 
\end{proof}

The remaining upper tail bounds for~$|\cI_K|$, $|\cT_K|$, $|\cP_K|$ and~$\Delta_H$ hinge on the following four key estimates.
First, $m_2(H)>1$ and strictly $2$-balancedness of~$H$ imply~${m_2(H)=(e_H-1)/(v_H-2)}$, so that 
\begin{equation}\label{eq:aux1}
	n^{v_H-2} p^{e_H-1} = \bigpar{np^{m_2(H)}}^{v_H-2} \le \bigpar{c C^{m_2(H)}}^{v_H-2} .
\end{equation}
Second, ${n=k^{m_2(H)-o(1)}}$ and~$m_2(H)>1$ imply that there is~$\tau=\tau(H) >0$ such~that 
\begin{equation}\label{eq:aux2}
	\tfrac{k}{n} = k^{1-m_2(H)+o(1)} \ll k^{-\tau}/\log k .
\end{equation}
Third, using~${p = k^{-1+o(1)}}$ and strictly $2$-balancedness of~$H$ (implying that~${(e_J-1)/(v_J-2) < m_2(H)}$ for all~$J \subsetneq H$ with~${e_J \ge 2}$),
it follows that there is~$\gamma=\gamma(H) >0$ such~that 
\begin{equation}\label{eq:aux3}
	n^{v_J-2}p^{e_J-1} = \bigpar{n p^{(e_J-1)/(v_J-2)}}^{v_J-2} \gg k^{\gamma} \qquad \text{for all~$J \subsetneq H$ with~$e_J \ge 2$}.
\end{equation}
The below-claimed fourth estimate can be traced back to Erd{\H{o}}s and Tetali~\cite{erdos1990representations}; 
we include an elementary proof for self-containedness
(see~\cite[Section~2]{Warnke2017} for related estimates that also allow for overlapping edge-sets). 
\begin{claim}\label{cl:UT}
Let~$\cS$ be a collection of 
edge-subsets from~$E(K_n)$. 
Set~$\mu:= \sum_{\beta \in \cS}\Pr(\beta \subseteq E(G_{n,p}))$.
Define~$Z$~as~the largest number of disjoint edge-sets from~$\cS$ that are present in~$G_{n,p}$. 
Then~$\Pr(Z \geq x) \le (e\mu/x)^x$ for all~$x > \mu$.  
\end{claim}
\begin{proof}[Proof of \refCl{cl:UT}]
Set~$s := \ceil{x} \ge 1$. Exploiting edge-disjointness and~$s! \ge (s/e)^s$, it follows that 
\[
\Pr(Z \ge x) \le \sum_{\substack{\set{\beta_1, \ldots, \beta_s} \subseteq \cS:\\\text{all edge-disjoint}}}\underbrace{\Pr\bigpar{\beta_1 \cup \cdots \cup \beta_s \subseteq E(G_{n,p})}}_{= \prod_{1 \le i \le s}\Pr\xpar{\beta_i  \subseteq E(G_{n,p})}} \le \frac{1}{s!}\biggpar{\sum_{\beta \in \cS}\Pr\bigpar{\beta \subseteq E(G_{n,p})}}^s \le \bigpar{e \mu/s}^s, 
\]
which completes the proof by noting that the function~$s \mapsto (e \mu/s)^s$ is decreasing for positive~$s \ge \mu$. 
\end{proof}

We are now ready to bound the probability that~$|\cI_K|$ is large. 
Since~$H$ is strictly $2$-balanced, it contains no isolated vertices and thus is uniquely determined by its edge-set. 
This enables us to apply~\refCl{cl:UT} to~$|\cI_K|=Z$ (as~$\cI_K$ is a size-maximal collection of edge-disjoint $H$-copies from~$\cH^*_K$). 
Using estimate~\eqref{eq:aux1}, it is routine to see that, for small enough~${c \le c_0(C,\delta,H)}$, the associated parameter~$\mu$ from~\refCl{cl:UT} satisfies 
\begin{equation}\label{eq:IK:mu}
	\mu \le O\bigpar{k^2 n^{v_H-2} \cdot p^{e_H}} \le \tbinom{k}{2}p \cdot \Theta(n^{v_H-2} p^{e_H-1}) \le \tfrac{\delta}{2 e^2} \tbinom{k}{2}p .
\end{equation}
Noting~$\delta kp=\delta C \log k$ and $n \ll k^{e_H}$, now \refCl{cl:UT} (with~$Z=|\cI_K|$) implies that, for large enough~${C \ge C_0(\delta,H)}$, we have 
\begin{equation}\label{eq:IK}
	\Pr\Bigpar{|\cI_K| \ge \tfrac{\delta}{2}\tbinom{k}{2}p} 
	\le \biggpar{\frac{e\mu}{\tfrac{\delta}{2}\tbinom{k}{2}p}}^{\tfrac{\delta}{2}\tbinom{k}{2}p} 
	\le e^{-\tfrac{\delta}{2}\tbinom{k}{2}p} \ll k^{-e_H k} \ll n^{-k}. 
\end{equation}

Next, we similarly use \refCl{cl:UT} to bound the probability that~$|\cT_K|$ is large. 
For the associated parameter~$\mu$ we shall proceed similar to~\eqref{eq:IK:mu}: 
using estimates~\eqref{eq:aux1}--\eqref{eq:aux2}, for small enough~${c \le c_0(C,\delta,H)}$ we obtain 
\begin{equation}\label{eq:TK:mu}
	\mu \le  O\bigpar{k^3 n^{v_H-3} \cdot p^{e_H}} \le \tbinom{k}{2}p \cdot \tfrac{k}{n} \cdot \Theta(n^{v_H-2} p^{e_H-1}) \le k^{-\tau} \cdot \tfrac{\delta}{e} \tbinom{k}{2}p/\log k .
\end{equation}
With similar considerations as for~\eqref{eq:IK}, for large enough~${C \ge C_0(\tau,\delta,H)}$ \refCl{cl:UT} (with~$Z=|\cT_K|$) then yields
\begin{equation}\label{eq:TK}
	\Pr\Bigpar{|\cT_K|\ge \delta \tbinom{k}{2}p/\log k}\le k^{-\tau \delta\binom{k}{2}p/\log k} = e^{-\tau \delta \binom{k}{2}p}  \ll k^{-e_H k} \ll n^{-k}. 
\end{equation}

We shall analogously use \refCl{cl:UT} to bound the probability that~$|\cP_K|$ is large. 
For the associated parameter~$\mu$, 
the basic idea is to distinguish all possible subgraphs~$J \subsetneq H$ in which the relevant~${H_1,H_2 \in \cH_K^*}$ can intersect. 
Also taking into account the number of vertices which~$H_1$ and~$H_2$ have inside~$K$, i.e., ${\bigl|\bigpar{V(H_1) \cup V(H_2)} \cap K\bigr| \in \{3,4\}}$, 
by definition of~$\cP_K$ it now follows via estimates~\eqref{eq:aux1}--\eqref{eq:aux3} that
\begin{equation}\label{eq:PK:0}
	\begin{split}
		\mu & \le \sum_{J \subsetneq H: e_J \ge 1} O\Bigpar{k^3 n^{2(v_H-2)-(v_J-1)} \cdot p^{2 e_H-e_J} + k^4 n^{2(v_H-2)-v_J} \cdot p^{2 e_H-e_J}} \\
		& \le \tbinom{k}{2}p \cdot \Bigsqpar{\tfrac{k}{n} + \bigpar{\tfrac{k}{n}}^2} \cdot \sum_{J \subsetneq H: e_J \ge 1} \frac{\Theta\bigpar{(n^{v_H-2} p^{e_H-1})^2}}{n^{v_J-2}p^{e_J-1}} \le k^{-\tau} \cdot \tfrac{\delta}{e} \tbinom{k}{2}p/\log k .
	\end{split}
\end{equation}
(To clarify: in~\eqref{eq:PK:0} above we used that~\eqref{eq:aux3} implies~$n^{v_J-2}p^{e_J-1} \ge 1$ for all~$J \subsetneq H$ with~$e_J \ge 1$.)  
Similarly to inequalities~\eqref{eq:IK} and \eqref{eq:TK}, for large enough~${C \ge C_0(\tau,\delta,H)}$ now \refCl{cl:UT} (with~$Z=|\cP_K|$) 
yields 
\begin{equation}\label{eq:PK}
	\Pr\Bigpar{|\cP_K|\ge \delta \tbinom{k}{2}p/\log k} \le k^{-\tau\delta\binom{k}{2}p/\log k} = e^{-\tau \delta \binom{k}{2}p}  \ll k^{-e_H k} \ll n^{-k}. 
\end{equation}

Finally, combining~\eqref{eq:IK}, \eqref{eq:TK} and~\eqref{eq:PK} with Claim~\ref{claim:2}, 
a standard union bound argument gives
\begin{equation}\label{eq:Hremoval:0}
	\Pr\Bigpar{Y_K \ge \delta \tbinom{k}{2}p \cdot \bigpar{\tfrac{1}{2}+4e_H^2\Delta_H/\log k} \text{ for some $k$-vertex set~$K$}} \le \tbinom{n}{k} \cdot o(n^{-k}) = o(1).
\end{equation}
To complete the proof of the~\mbox{$Y_K$ bound}, it thus remains to show that, for small enough~${c \le c_0(C,H)}$, we~have 
\begin{equation}\label{eq:DeltaH}
	\Pr\Bigpar{\Delta_H \ge (\log k)/(8 e_H^2)}=o(1).
\end{equation} 
Using~\eqref{eq:aux1}, \eqref{eq:aux3} and~$n \ll k^{e_H}$, 
this upper tail estimate for~$\Delta_{H}=\max_f \Delta_{H,f}$ follows routinely from standard concentration inequalities such as~\cite[Theorem~32]{warnke2016missing}, 
but we include an elementary proof for self-containedness (based on ideas from~\cite{spohel2013general,Warnke2017}). 
Turning to the proof of~\eqref{eq:DeltaH}, 
let~$\Delta_{H,f,g}$ denote the number of $H$-copies in~$G_{n,p}$ that contain the edges~$\set{f,g}$, 
and define~$\Delta^{(2)}_{H}$~as the maximum of~$\Delta_{H,f,g}$ over all distinct~${f,g \in E(K_n)}$. 
We call an $r$-tuple~$(H_1,\dots, H_r)$ of~$H$-copies an~\emph{$(r,f,g)$-star} if  
each~$H_j$ contains the edges~$\set{f,g}$ and satisfies ${H_j \not\subseteq H_1 \cup \cdots \cup H_{j-1}}$. 
Define~$Z_{r,f,g}$ as the number of~$(r,f,g)$-stars~$(H_1,\dots, H_r)$ that are present in~$G_{n,p}$. 
Summing over all~$(r+1,f,g)$-stars~$(H_1,\dots, H_{r+1})$, 
by noting that the intersection of $H_{r+1}$ with ${F_r:=H_1 \cup \cdots \cup H_{r}}$ is isomorphic to some proper subgraph~${J \subsetneq H}$ containing at least~$e_J \ge 2$ edges, 
using estimates~\eqref{eq:aux1} and~\eqref{eq:aux3} it then is routine to see that, for~${1 \le r \le r_0:= 1+\ceil{(v_He_H + 4 e_H)/\gamma}}$, we~have 
\begin{equation*}
	\begin{split}
		\E Z_{r+1,f,g}  &= \sum_{(H_1, \ldots, H_{r+1})} p^{e_{H_1 \cup \cdots \cup H_{r+1}}} = \sum_{(H_1, \ldots, H_r)} p^{e_{F_r}} \sum_{H_{r+1}}p^{e_H-e_{H_{r+1} \cap F_r}} \\
		& \le \sum_{(H_1, \ldots, H_r)} p^{e_{F_r}} \cdot \sum_{J \subsetneq H: e_J \ge 2}O\Bigpar{(v_H r)^{v_J} n^{v_H-v_J} \cdot p^{e_H-e_J}} 
		\le \E Z_{r,f,g} \cdot k^{-\gamma} .
	\end{split}
\end{equation*}
Since trivially~$\E Z_{1,f,g} = O(n^{v_H})$, using~$n \ll k^{e_H}$ we infer~$\E Z_{r_0,f,g} \le k^{v_He_H-(r_0-1)\gamma} \le k^{-4 e_H} \ll n^{-4}$.  
Consider a maximal length~$(r,f,g)$-star~$(H_1,\dots, H_r)$ in~$G_{n,p}$, and 
note that in~$G_{n,p}$ any $H$-copy containing the edges~$\set{f,g}$ is completely contained in~$H_1 \cup \cdots \cup H_r$ (by length maximality), 
so that~$\Delta_{H,f,g} \le (e_H r)^{e_H}$ holds (using that~$H$ is uniquely determined by its edge-set). 
For~$D := (e_H r_0)^{e_H}$ it follows~that
\begin{equation}\label{eq:DeltaH2}
	\Pr\Bigpar{\Delta^{(2)}_H \ge D} \le \sum_{f \neq g}\Pr\bigpar{\Delta_{H,f,g} \ge D} \le \sum_{f \neq g} \Pr(Z_{r_0,f,g} \ge 1) \le \sum_{f \neq g} \E Z_{r_0,f,g} \le \tbinom{n}{2}^2 \cdot o(n^{-4}) = o(1). 
\end{equation} 
With an eye on~$\Delta_{H,f}$, let~$\cH_f$ denote the collection of all $H$-copies in~$K_n$ that contain the edge~$f$. 
We pick a subset~$\cI \subseteq \cH_f$ of $H$-copies in~$G_{n,p}$ that is size-maximal subject to the restriction that all $H$-copies are edge-disjoint after removing the common edge~$f$. 
For any~$H' \in \cH_f$, note that in~$G_{n,p}$ there are a total of 
at most~$e_H \Delta^{(2)}_H$ copies of~$H$ that share~$f$ and at least one additional edge with~$H'$. 
Hence~$\Delta_{H,f} \ge (\log k)/(8 e_H^2)$ and~$\Delta^{(2)}_H \le D$ imply~$|\cI| \ge \ceil{(\log k)/A}=:z$ for~$A := 8 e_H^3 D$ (by maximality of~$\cI$).  
As the union of all $H$-copies in~$\cI$ contains exactly $1+(e_H-1)|\cI|$ edges, 
using~$\tbinom{m}{z} \le (em/z)^z$ and~$|\cH_f| =O(n^{v_H-2})$ it follows~that 
\begin{equation}\label{eq:DeltaHf}
	\Pr\Bigpar{\Delta_{H,f} \ge (\log k)/(8 e_H^2) \text{ \ and \ } \Delta^{(2)}_H \le D} \le \binom{|\cH_f|}{z} \cdot p^{1+(e_H-1)z} \le \biggpar{\frac{O(n^{v_H-2}p^{e_H-1})}{z}}^{z} .
\end{equation}
Using estimate~\eqref{eq:aux1}, for small enough~${c \le c_0(A,C,H)}$ the right-hand side of~\eqref{eq:DeltaHf} is at most~$(\log k)^{-(\log k)/A} \ll k^{-2 e_H}$.  
Recalling~$n \ll k^{e_H}$, by taking a union bound over all edges~$f \in E(K_n)$ it then follows that 
\begin{equation}\label{eq:DeltaH:UB}
	\Pr\Bigpar{\Delta_H \ge (\log k)/(8 e_H^2)  \text{ \ and \ } \Delta^{(2)}_H \le D} 
	\le \tbinom{n}{2} \cdot o(k^{-2 e_H}) = o(1) , 
\end{equation} 
which together with~\eqref{eq:DeltaH2} completes the proof of estimate~\eqref{eq:DeltaH} and thus the~\mbox{$Y_K$ bound} of \refT{thm:Hremoval}. 
\end{proof}
%
The above proof of~\eqref{eq:DeltaH} can easily be sharpened 
to ${\Pr\bigpar{\Delta_H \ge B(\log k)/\log \log k}=o(1)}$ for a suitable constant ${B=B(H)>0}$, see~\eqref{eq:DeltaHf}--\eqref{eq:DeltaH:UB}. 
Together with the proof of~\eqref{eq:Hremoval:0} and~${|\cI_K| \le Y_K}$, 
this implies that whp ${Y_K = |\cI_K|+o\bigpar{\delta\tbinom{k}{2}p}}$ for all $k$-vertex~subsets~$K$, 
which suggests that~$Y_K$ is well-approximated by~$|\cI_K|$.

To complete the proof of \refT{thm:Hremoval}, it remains to give the routine Chernoff bound based proof of the \mbox{$X_K$ bound} of \refT{thm:Hremoval}, 
where~$X_K=|E(G_{n,p}[K])|$ denotes the number of edges of~$G_{n,p}$ inside~$K$. 
\begin{proof}[Proof of the~\mbox{$X_K$ bound} of \refT{thm:Hremoval}]
Noting~$X_K \sim\mathrm{Bin}\bigpar{\tbinom{k}{2},p}$ as well as~$\delta^2kp = \delta^2C\log k$ and $n \ll k^{e_H}$, 
by standard Chernoff bounds (such as~\cite[Theorem~2.1]{JLR}) it follows, for large enough~${C \ge C_0(\delta,H)}$, that
\begin{equation*}
	\Pr\bigpar{X_K \le (1-\delta) \tbinom{k}{2}p} \;\le\; \indic{\delta \in (0,1]}\exp\Bigpar{-\delta^2\tbinom{k}{2}p/2} \ll k^{-e_H k} \ll n^{-k}. 
\end{equation*}
Taking a union bound over all $k$-vertex subsets~$K$ completes the proof of the~\mbox{$X_K$ bound} of \refT{thm:Hremoval}. 
\end{proof}

\section{Extensions}\label{sec:extensions}
In applications of the alteration method outlined in \refS{sec:intro:tool}, it often is beneficial to keep track of further properties of the resulting $H$-free $n$-vertex graph~$G \subseteq G_{n,p}$, 
including vertex-degrees and the number of edges (see, e.g., \cite[Section~3]{erdos1988}, \cite[Section~2]{bohman2010coloring}, and~\cite[Section~5.1]{KLST2018}). 
Using the arguments and intermediate results from \refS{sec:Alteration}, oftentimes it is routine to show that~$G$ resembles a random graph~$G_{n,p}$ in many ways. 
For example, with standard results for~$G_{n,p}$ in mind, the following simple lemma intuitively implies that whp  
the resulting~$G$ is approximately~$np$ regular, with about~$\tbinom{n}{2}p$ edges 
(note that~$np \gg 1$ when~$m_2(H) > 1$). 
\begin{lemma}\label{lem:degree}
Let~$H$ be a strictly~$2$-balanced graph with~$m_2(H)>1$.   
Define~$Y$ as the number of~$H$-copies in~$G_{n,p}$, 
and define~$Y_v$ as the number of~$H$-copies in~$G_{n,p}$ that contain the vertex~$v$. 
For any fixed~$\delta > 0$, the following holds for all sufficiently large~${C \ge C_0=C_0(\delta,H)}$ and sufficiently small~${0 < c \le c_0=c_0(C,\delta,H)}$. 
Setting~$n$ and~$p$ as in~\refT{thm:Hremoval}, whp~$G_{n,p}$ satisfies~$Y_v \le \delta np$ for all vertices~$v$, and~$Y \le \delta \tbinom{n}{2}p$. 
\end{lemma}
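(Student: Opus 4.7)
The plan is to use linearity of expectation and moment-based concentration, leveraging the estimates from the proof of \refT{thm:Hremoval}.

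By linearity, $\E Y=\Theta(n^{v_H}p^{e_H})$ and $\E Y_v=\Theta(n^{v_H-1}p^{e_H})$ (uniformly in~$v$). Estimate~\eqref{eq:aux1} from \refS{sec:YK} gives that both $\E Y/\binom{n}{2}p$ and $\E Y_v/(np)$ are of order $(cC^{m_2(H)})^{v_H-2}$, so choosing $c=c_0(C,\delta,H)$ small enough makes $\E Y\le \delta\binom{n}{2}p/4$ and $\E Y_v\le \delta np/4$ for all~$v$.

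For the bound on~$Y$, I would apply Chebyshev's inequality. The standard variance decomposition for subgraph counts yields
\[
\Var(Y)=\sum_{\emptyset\neq J\subseteq H}\Theta\bigpar{(\E Y)^2/(n^{v_J}p^{e_J})},
\]
and strict $2$-balancedness together with $m_2(H)>1$ (via~\eqref{eq:aux3}) implies that the minimum of $n^{v_J}p^{e_J}$ over nonempty $J\subseteq H$ equals $n$ (attained at $J$ an isolated vertex). Hence $\Var(Y)=O((\E Y)^2/n)$, and Chebyshev gives $\Pr\bigpar{Y\ge \delta\binom{n}{2}p}=O(1/n)=o(1)$.

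The main obstacle is the uniform bound $Y_v\le \delta np$ for all~$v$: the analogous Chebyshev estimate combined with a union bound over the $n$ vertices falls short by a factor of roughly~$1/p$. To close this gap, I would invoke a concentration inequality for subgraph counts sharper than Chebyshev---for example the Kim--Vu polynomial concentration or the Janson--Ruci{\'n}ski upper-tail inequality. Under our assumptions (strict $2$-balancedness, $np^{m_2(H)}=\Theta(1)$, and $\E Y_v\le \delta np/4$ a small multiple of~$np$), these yield $\Pr(Y_v\ge \delta np)\le \exp(-\Omega(np))$; since $np=\Theta(k^{m_2(H)-1}(\log k)^{1-m_2(H)})\gg\log n$ when $m_2(H)>1$, this probability is $o(1/n)$, and a union bound over the $n$ vertices completes the~proof. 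An alternative route closer to \refS{sec:YK} is to adapt \refCl{claim:2}: bound a size-maximal edge-disjoint collection $\cI_v$ of $H$-copies through~$v$ via \refCl{cl:UT}, together with overlap corrections via auxiliary edge-disjoint collections of overlapping $H$-copy pairs through~$v$, all handled in the style of $|\cT_K|$ and $|\cP_K|$ with the aid of~\eqref{eq:aux2}--\eqref{eq:aux3}.
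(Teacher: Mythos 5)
Your expectation computations and the Chebyshev argument for the single global count $Y$ are essentially fine (modulo the detail that the variance sum should run only over $J\subseteq H$ with $e_J\ge 1$, since pairs of copies sharing no edges have zero covariance; the relevant minimum of $n^{v_J}p^{e_J}$ is then $n^2p$ at $J=K_2$ rather than $n$, which only helps you). This is a valid and genuinely different route to the $Y$-bound: the paper instead deduces it from the $Y_v$-bounds via the identity $Y=\sum_{v}Y_v/v_H$.

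The gap sits exactly where you locate "the main obstacle": the uniform bound on $Y_v$. The rate $\Pr(Y_v\ge\delta np)\le\exp(-\Omega(np))$ that you attribute to Kim--Vu or Janson--Ruci\'nski is not something those tools deliver. Upper-tail bounds of the form $\exp(-\Omega(\mathrm{mean}))$ for subgraph counts are precisely what the ``infamous upper tail'' phenomenon rules out for generic arguments---this is the very obstacle the paper emphasizes in \refS{sec:intro:tool} and in the Appendix, where an $e^{-o(k)}$ lower bound is proved for the upper tail of the analogous quantity $|\cH_K|$. Janson's inequality is a lower-tail tool, and the general upper-tail inequalities of Janson--Ruci\'nski are far weaker than $\exp(-\Omega(\E X))$. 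Kim--Vu, after one checks that all partial-derivative expectations are $O(1)$ (which does follow from strict $2$-balancedness via~\eqref{eq:aux1} and~\eqref{eq:aux3}), would give roughly $\exp\bigpar{-c(np)^{1/(2e_H)}+O(e_H\log n)}$; since $(np)^{1/(2e_H)}=k^{(m_2(H)-1)/(2e_H)-o(1)}\gg\log n$, this would in fact survive the union bound over the $n$ vertices---but that verification is exactly the content you omit, and the rate you state is not what the inequality provides. The paper instead takes the route you only gesture at in your final sentence: it first shows (estimate~\eqref{eq:DeltaH}) that whp $\Delta_H\le(\log k)/(8e_H^2)$, so that every $H$-copy edge-intersects fewer than $\log k$ other copies, and then applies the bounded-overlap upper-tail inequality of \cite[Theorem~15]{GW2017} to obtain $\Pr\bigpar{Y_v\ge\delta np \text{ and } \Delta_H\le(\log k)/(8e_H^2)}\le(e\mu/(\delta np))^{\delta np/\log k}\le e^{-\delta np/\log k}\ll n^{-1}$, with $\mu\le\tfrac{\delta}{e^2}np$ by the expectation bound. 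To complete your proof you must either carry out the Kim--Vu verification in detail or flesh out this overlap-based argument; as written, the central step rests on a concentration bound that is not available.
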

\begin{proof}
Since~$m_2(H)>1$ implies~$v_H \ge 3$, noting~$Y = \sum_{v \in [n]} Y_v/v_H$ it suffices to prove the claimed bounds on the~$Y_v$. 
Fix a vertex~$v$.
Similar to estimate~\eqref{eq:IK:mu}, using~\eqref{eq:aux1} it is standard to see that the expected number of $H$-copies containing~$v$ 
is at most $\mu \le O(n^{v_H-1}p^{e_H}) \le \tfrac{\delta}{e^2}np$ for small enough~${c \le c_0(C,\delta,H)}$.  
Furthermore, if~$\Delta_H \le (\log k)/(8 e_H^2)$ holds (see~\eqref{eq:DeltaH} in \refS{sec:Alteration}), 
then any $H$-copy edge-intersects a total of at most~${e_H \cdot \Delta_H < \log k}$ many $H$-copies, say. 
Applying the upper tail inequality~\cite[Theorem~15]{GW2017} instead of~\refCl{cl:UT}, 
using~$\delta np=\delta cC k^{m_2(H)-1-o(1)} \gg (\log k)^2$ it then is, similar to~\eqref{eq:IK} and~\eqref{eq:DeltaH:UB}, routine to see~that
\begin{equation*}
	\Pr\Bigpar{Y_v \ge \delta np  \text{ \ and \ } \Delta_H \le (\log k)/(8 e_H^2)} 
	\le \biggpar{\frac{e\mu}{\delta np}}^{\delta np/\log k} 
	\le e^{-\delta np/\log k} \ll n^{-1}. 
\end{equation*}
Taking a union bound over all vertices~$v$ now completes the proof together with estimate~\eqref{eq:DeltaH}. 
\end{proof}

It is straightforward, and useful for many applications (see, e.g.,~\cite{krivelevich1997approximate,AF2008,bohman2010coloring}), 
to extend the alteration method to $r$-uniform hypergraphs, where every edge contains $r \ge 2$ vertices. 
Indeed, to forbid a given $r$-uniform hypergraph~$H$, similarly to the graph case~($r=2$) discussed in \refS{sec:intro:tool}, 
here the idea is to delete edges from a \emph{random $r$-uniform hypergraph~$G^{(r)}_{n,p}$} 
(where each of the $\tbinom{n}{r}$ possible edges appears independently with probability~$p$) 
to construct an $n$-vertex $r$-uniform hypergraph~${G \subseteq G^{(r)}_{n,p}}$ that is $H$-free. 
Defining 
\[
m_r(H):=\max_{F\subseteq H}\Bigpar{\indic{v_F \ge r+1}\tfrac{e_F-1}{v_F-r} + \indic{v_F=r,e_F=1}\tfrac{1}{r}} ,
\]
we say that~$H$ is \emph{strictly~$r$-balanced} if $m_r(H) > m_r(F)$ for all~$F \subsetneq H$. 
Noting~$G_{n,p}=G^{(2)}_{n,p}$, now the proof of \refT{thm:Hremoval}
routinely carries over with only obvious notational changes (where~$X_K=|E(G^{(r)}_{n,p}[K])|$ denotes the number of edges of~$G^{(r)}_{n,p}$ inside~$K$, 
and~$Y_K$ denotes the number of edges in~$E(G^{(r)}_{n,p}[K])$ that are in some~$H$-copy of~$G_{n,p}$), 
yielding the following hypergraphs extension of our main alteration tool \refT{thm:Hremoval}.  
%
\begin{theorem}\label{thm:Hremoval:HG}
Given~$r \ge 2$, let~$H$ be a strictly~$r$-balanced $r$-uniform hypergraph. 
Then, for any fixed ${\delta >0}$, the following holds for all sufficiently large~${C \ge C_0=C_0(\delta,H,r)}$ and sufficiently small~${0 < c \le c_0=c_0(C,\delta,H,r)}$. 
The random $r$-uniform hypergraph~$G^{(r)}_{n,p}$ with ${n := \floor{c (k/\log k)^{m_r(H)}}}$ vertices and edge-probability ${p := C(\log k)/k}$ 
whp 
satisfies~${Y_K \le \delta \tbinom{k}{r}p}$ and~${X_K \ge (1-\delta) \tbinom{k}{r}p}$ for all~$k$-vertex subsets~$K$ of~$G^{(r)}_{n,p}$. 
\end{theorem}

Finally, numerous applications~\cite{krivelevich1997approximate,krivelevich1997minimal,AF2008,bohman2010coloring} of the alteration method require forbidding a finite collection of hypergraphs~$\cH={\{H_1, \ldots, H_s\}}$. 
The crux is that the bounds on~$Y_K$ and~$X_K$ from \refT{thm:Hremoval:HG} trivially remain valid for~${n \le \floor{c (k^{r-1}/\log k)^{m_r(H)}}}$.  
So, applying this result to all forbidden~${H_i \in \cH}$ simultaneously, 
one can easily obtain a variant of \refT{thm:Hremoval:HG} where~$Y_K$ denotes the number of edges in~$G^{(r)}_{n,p}[K]$ 
that are in at least one~$H_i$-copy of~$G_{n,p}^{(r)}$ for some~$H_i \in \cH$; 
we leave the routine details to the interested~reader. 
%
%

\bigskip{\noindent\bf Acknowledgements.} 
We would like to thank Jacob Fox for helpful clarifications regarding~\cite{conlon2018online}. 
We also thank the referees for useful comments concerning the presentation.

\small
\bibliographystyle{plain}

\vspace{-0.85em}

\normalsize

\begin{appendix}
	
\section{Appendix: Lower bound on the upper tail of~$|\cH_K|$}\label{apx:UT}
Given a fixed graph~$H$ with~$v_H \ge 3$, let us consider a binomial random graph~$G_{n,p}$ with edge-probability ${p = \Theta\bigpar{(\log k)/k}}$ as~$k \to \infty$. 
Fix a $k$-vertex subset~$K$ of~$G_{n,p}$ (which tacitly requires~$k \le n$), 
and let~$\cH_K$ denote the collection of all $H$-copies that have at least one edge inside~$K$. 
Given~$\delta>0$, we fix~$v_H$ disjoint vertex subsets of~$K$, each of size~${t:=\bigceil{\bigpar{\delta \tbinom{k}{2}p}^{1/v_H}}}$. 
Then~$G_{n,p}$ contains with probability~$p^{\tbinom{v_H}{2}t^2}$ a complete $v_H$-partite subgraph on these $v_H$~sets, 
which enforces~${|\cH_K| \ge t^{v_H} \ge \delta \tbinom{k}{2}p}$. 
It readily follows that 
\[
\Pr\Bigpar{|\cH_K| \ge \delta \tbinom{k}{2}p} \;\ge\; p^{\tbinom{v_H}{2}t^2} \;\ge\; e^{-o(k)} ,
\] 
as claimed in~\refS{sec:intro:tool} (since~${t^2 \cdot \log(1/p) \le k^{2/v_H + o(1)} \cdot O(\log k) = o(k)}$ as~$k \to \infty$). 
	
\end{appendix}
	
\end{document}